\documentclass[11pt,letterpaper]{amsart}
\usepackage{amsmath}
\usepackage{amsfonts}
\usepackage{tikz-cd}
\usepackage{amsthm}
\usepackage{amssymb}
\usepackage{hyperref}
\usepackage{mathtools}
\usepackage{mathrsfs}
\usepackage{wasysym}
\usepackage{cite}

\theoremstyle{plain}
\newtheorem{theorem}{Theorem}           
\newtheorem{proposition}[theorem]{Proposition}

\newtheorem{lemma}[theorem]{Lemma}

\theoremstyle{definition}

\newtheorem{example}[theorem]{Example}

\theoremstyle{remark}
\newtheorem*{remark}{Remark}

\title{Failure of weak approximation in adjoint groups}
\author{Chayansudha Biswas}
\address{Department of Mathematics, University of Pennsylvania, Philadelphia, PA 19104-6395, USA}
\email{biswasch@sas.upenn.edu}
\subjclass{14G12, 11E57, 20G15, 14E08}
\keywords{weak approximation, adjoint groups, R-equivalence, rational varieties}

\begin{document}
    \maketitle

    \begin{abstract}
        Platonov in 1991 conjectured that adjoint groups are rational as varieties over arbitrary infinite fields, and as a consequence have weak approximation. The rationality part of the conjecture was disproved by Merkurjev in 1996, but the question about weak approximation remained open. We settle this in the negative.

    \end{abstract}

    \section*{Introduction}
    A generalization of the Chinese Remainder Theorem is the property of weak approximation of a field: if $S$ is any finite set of inequivalent discrete valuations on a field $F$, then the image of $F$ is dense in $\prod_{v\in S}F_v$ under the diagonal embedding, where $F_v$ denotes the completion of $F$ with respect to a valuation $v\in S$ \cite[Theorem 1.4]{MR1278263}. The topology considered here is the product adic topology. If $X$ is a variety defined over the field $F$, then one can ask if the image of $X(F)$ is dense in $\prod_{v\in S}X(F_v)$ under the diagonal embedding. If this is the case, then we say that $X$ \textit{has weak approximation with respect to} $S$. We say that $X$ has weak approximation if it has weak approximation with respect to every finite set of discrete valuations of $F$. All rational varieties have weak approximation, and hence failure of weak approximation is an obstruction to rationality.

    Weak approximation in the setting of algebraic groups is classical and well understood in many cases. For instance, it is well established that all semisimple simply connected and semisimple adjoint groups over global fields have weak approximation. The same statement is known to be false (due to work by Platonov in \cite{platonov1976reduced}) over arbitrary valued fields in the simply connected case, motivating a focus on adjoint groups instead. Platonov conjectured all semisimple adjoint groups over arbitrary fields are rational as varieties, and therefore have weak approximation \cite[\S 7.3]{MR1278263}. The rationality part of the conjecture was disproved by Merkurjev in \cite{merkurjev1996r}, where examples were given of adjoint groups that fail rationality. More examples of nonrational adjoint groups can be found in \cite{gille1997examples},  \cite{berhuy2004cohomological} and \cite{bhaskhar2014more}. 
    
    The obstruction to rationality in these examples is the failure of universal $R$-triviality: for a group $G$ defined over a field $F$, we say two points $x,y\in G(F)$ are \emph{$R$-equivalent} if there is a rational map $f\colon \mathbb{A}_F^1\dashrightarrow G$ defined at $0$ and $1$ such that $f(0) = x$ and $f(1) = y$. This is an equivalence relation on $G(F)$ and we denote by $G(F)/R$ the set of $R$-equivalence classes. We say the group $G$ is \emph{universally $R$-trivial} if $G(E)/R=1$ for all field extensions $E$ of $F$. This notion of $R$-equivalence was introduced by Manin in \cite{manin1986cubic}. Rationality of (the underlying variety of) a group implies universal $R$-triviality, and therefore Merkurjev constructed adjoint groups that fail universal $R$-triviality to obtain nonrational adjoint groups. 
    
    It was still not known, however, if there exist adjoint groups failing weak approximation. Over arbitrary fields, weak approximation has been shown for many cases of adjoint groups by Th\u{a}\'{n}g in \cite{thuang1996weak}, but the question remained open in general. To the author's knowledge, no further progress has been made towards the conjecture since.

    In the present article, we will settle the problem by constructing adjoint groups that fail weak approximation (see Theorem \ref{conjecture-disproved}, and see Example \ref{example} for an explicit example).
    For every global field $F$ of characteristic different from $2$ in which $-1$ is a square, we will give a recipe to construct infinitely many adjoint groups, defined over a suitable overfield of $F$, that fail weak approximation.

    To construct our counterexamples, we fix such a field $F$, and using a criterion of Merkurjev we first construct an adjoint group $G$ of type $D_n$ over~ $F$ such that $G(F)/R=1$ but $G(E)/R\neq 1$ for some extension $E$ over~ $F$. The main trick is to slightly modify the situation above to obtain a field $K$ over $F$ which admits a discrete valuation $v$ such that $G(K)/R=1$ and $G(K_v)/R\neq 1$. 
    We then use a result of Raghunathan to show that every~ $K_v$-rational point $g$ of $G$ has a $v$-adic neighborhood $U_g$ contained in the $R$-equivalence class of that point. In particular, we consider $g\in G(K_v)$ not in the $R$-equivalence class of $1$. Weak approximation in $G$ would imply that $U_g$ has nonempty intersection with $G(K)$. But $G(K)/R=1$ implies that points in this intersection are $R$-equivalent to the identity, which gives us a contradiction.

    \section{Constructing adjoint groups failing universal $R$-triviality}

    The aim of this section is the construction of adjoint groups that fail universal $R$-triviality. This is done in Proposition \ref{merk-psim}, using a criterion due to Merkurjev. Proposition \ref{merk-psim} will later be used in \S 2 to prove the main theorem. The following proposition is the main technical ingredient:
    
    \begin{proposition}\label{main-prop-sec1}
    Let $F$ be any global field with $\operatorname{char}F\neq 2$ in which $-1$ is a square. Then there is a quaternion algebra $Q$ over $F$ and an element~ $d\in~ F^{\times}\setminus F^{\times 2}$ such that $d\in -\operatorname{Nrd}(Q)$ and $d\notin\operatorname{Nrd}(Q^+)$. Here $Q^+$ denotes the subspace of $Q$ consisting of the pure quaternions, and $\operatorname{Nrd}\colon Q\to F$ is the reduced norm.
\end{proposition}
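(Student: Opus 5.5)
We reduce both conditions to statements about diagonal quadratic forms and then use the Hasse--Minkowski theorem. Write $Q=(a,b)_F$ with standard basis $1,i,j,k$, where $i^2=a$, $j^2=b$, $k=ij$. For $x=\alpha+\beta i+\gamma j+\delta k$ we have $\operatorname{Nrd}(x)=\alpha^2-a\beta^2-b\gamma^2+ab\delta^2$. Since $-1$ is a square in $F$, we may change signs freely in a diagonal form. So the nonzero values of $\operatorname{Nrd}$ on $Q$, and also those of $-\operatorname{Nrd}$, are exactly the nonzero values of $\langle 1,a,b,ab\rangle$. Likewise $\operatorname{Nrd}(Q^+)\setminus\{0\}$ is the set of nonzero values of $\langle a,b,ab\rangle$. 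It therefore suffices to find $a,b,d$ with $d\notin F^{\times 2}$ such that $d$ is represented by $\langle 1,a,b,ab\rangle$ but not by $\langle a,b,ab\rangle$.

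First we show that $-\operatorname{Nrd}(Q)\supseteq F^\times$ for every quaternion algebra $Q$ over $F$. The field $F$ has no real places because $-1$ is a square. For $d\in F^\times$, the form $\langle 1,a,b,ab,-d\rangle$ has dimension $5$. At each finite place it is isotropic, since every form of dimension at least $5$ over a nonarchimedean local field is isotropic. At complex places it is trivially isotropic. By Hasse--Minkowski it is isotropic over $F$, so $\langle 1,a,b,ab\rangle$ represents $d$. (Alternatively: locally the reduced norm of a quaternion algebra is surjective, and we can use the Hasse norm principle for quaternion algebras.) Hence the condition $d\in-\operatorname{Nrd}(Q)$ is automatic.

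Next we arrange $d\notin\operatorname{Nrd}(Q^+)$ by a local obstruction. Choose a finite place $v$ at which $Q$ is ramified. Such $Q$ exists by class field theory: there is a quaternion algebra ramified at exactly two prescribed finite places $v,w$. Suppose $d$ is a square in $F_v$. If $\langle a,b,ab\rangle$ represented $d$ over $F$, it would represent $1$ over $F_v$. Then $\langle 1,-a,-b,ab\rangle\simeq\langle -1,a,b,ab\rangle$ would be isotropic over $F_v$, i.e.\ $Q_v$ would be split, which contradicts the choice of $v$. Thus it suffices to choose $d$ with $d\in F_v^{\times 2}$ and $d\notin F^{\times 2}$.

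Such a $d$ exists. By weak approximation on $F$, choose $d$ very close to $1$ at $v$, so that $d$ is a $v$-adic square by Hensel's lemma when $v\nmid 2$, or by taking sufficiently high precision when $v\mid 2$. Simultaneously require $d$ to have valuation $1$ at some other finite place $u\neq v$; then $d$ is not a square in $F$. This gives the required $Q$ and $d$. The main point to get right is the reformulation of the two conditions as representation problems for $\langle 1,a,b,ab\rangle$ and $\langle a,b,ab\rangle$, using $-1\in F^{\times 2}$. After that, the only delicate input is the existence of a quaternion algebra over $F$ ramified at a finite place, which follows from the exact sequence $0\to\operatorname{Br}F\to\bigoplus_v\operatorname{Br}F_v\to\mathbb{Q}/\mathbb{Z}$.
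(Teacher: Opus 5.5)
Your argument is correct, but it verifies both conditions differently from the paper.

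\textbf{The paper's route.} The paper takes $Q$ ramified exactly at an even set $\Sigma$ containing a non-dyadic place $v$, and chooses $d$ to be a square at every place of $\Sigma$.
\begin{enumerate}
\item It proves $d\in-\operatorname{Nrd}(Q)$ via Hasse--Schilling--Maass, checking each completion and using $-d=(ic)^2$ at the ramified places.
\item It proves $d\notin\operatorname{Nrd}(Q^+)$ by normalizing $Q\simeq(a,b)_F$ with $v(a)=0$, $v(b)=1$ through the explicit Hilbert symbol formula (Lemma~\ref{ab-vals}). It then reduces $D=-aX^2-bY^2+abZ^2$ modulo $\mathfrak{m}_v$, which forces $a$ to be a square in the residue field.
\end{enumerate}

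\textbf{Your route.} You observe that the first condition is automatic for every $Q$ and every $d$: $F$ has no real places, so $\langle 1,a,b,ab\rangle$ is universal by Hasse--Minkowski. For the second condition you replace the coordinate computation with a conceptual fact. When $-1$ is a square, a pure quaternion whose norm is a square in $F_v$ gives an isotropic vector of the norm form $\langle 1,-a,-b,ab\rangle\simeq\langle -1,a,b,ab\rangle$, so $Q_v$ would split.

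\textbf{Comparison.} Your approach is shorter and needs $d$ to be a square at only one ramified place. It works at any ramified place, including dyadic ones, so the non-dyadic choice of $v$ and Lemma~\ref{ab-vals} become unnecessary. It also isolates the actual content: over any field containing $\sqrt{-1}$, a division quaternion algebra has no pure quaternion whose norm is a nonzero square. The paper's route reaches the same local anisotropy statement, but through explicit valuation and Legendre-symbol bookkeeping. The remaining ingredients, existence of $Q$ ramified at a prescribed finite place and of a global non-square $d$ that is a local square at $v$, are the same in both.
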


    For $F$ and $Q$ as in the proposition, let $A = M_n(Q)$, for $n\geq 3$ an odd integer. Then the hypothesis of \cite[Proposition 11]{merkurjev1996r} is satisfied, and thus there exists an involution $\sigma$ on $A$ with $\operatorname{disc}(\sigma) = d\cdot F^{\times 2}$. In the proof of this proposition it is shown that the hypothesis of \cite[Theorem 2]{merkurjev1996r} is satisfied, and hence the $F$-group $G = \mathbf{PSim}_+(A,\sigma)$ is an adjoint group of type $D_n$ failing universal $R$-triviality. (See \cite[p.2]{berhuy2004cohomological} for a definition of this group.)

    Failure of universal $R$-triviality means that there exists a field $E$ over $F$ such that $G(E)/R\neq 1$. The field $E$ is constructed from an iterated finite tower of fields over $F$ that are either function fields of quadrics or function fields of Severi-Brauer varieties (see \cite[Theorem 2]{merkurjev1996r}). This ensures that $E$ is a separably generated extension over $F$ of finite transcendence degree. We will use this fact to prove in the next section that $G$ fails weak approximation over a suitable overfield of $F$. We summarize this in the following proposition:
    \begin{proposition}\label{merk-psim}
        For any global field $F$ with $\operatorname{char}F\neq 2$ in which $-1$ is a square, there is an adjoint $F$-group $G$ of classical type $D_n$, where $n$ is an odd integer $\geq 3$, and a separably generated extension $E$ over $F$ of finite transcendence degree such that $G(E)/R\neq 1$.
    \end{proposition}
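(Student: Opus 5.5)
The argument assembles Proposition \ref{main-prop-sec1} with Merkurjev's results in \cite{merkurjev1996r}, and then checks the properties of the extension field $E$. Fix a global field $F$ with $\operatorname{char}F\neq 2$ and $-1\in F^{\times 2}$. Proposition \ref{main-prop-sec1} supplies a quaternion algebra $Q$ and an element $d\in F^\times\setminus F^{\times 2}$ with $d\in -\operatorname{Nrd}(Q)$ and $d\notin \operatorname{Nrd}(Q^+)$.

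Choose an odd integer $n\geq 3$ and set $A=M_n(Q)$. The first step is to verify the hypotheses of \cite[Proposition 11]{merkurjev1996r}; these are exactly the conditions on $Q$ and $d$ above. That proposition gives an orthogonal involution $\sigma$ on $A$ with $\operatorname{disc}(\sigma)=d\cdot F^{\times 2}$. Since $A$ has degree $2n$, the group $G=\mathbf{PSim}_+(A,\sigma)$ is an adjoint group of type $D_n$. The proof of \cite[Proposition 11]{merkurjev1996r} shows that the hypothesis of \cite[Theorem 2]{merkurjev1996r} holds for $(A,\sigma)$. That theorem then gives a field extension $E/F$ with $G(E)/R\neq 1$.

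The second step, which is the only new point, is to show that this $E$ is separably generated of finite transcendence degree over $F$. For this I would examine the construction in \cite[Theorem 2]{merkurjev1996r}. There $E$ is obtained as a finite tower $F=E_0\subset E_1\subset\cdots\subset E_m=E$, in which each $E_{i+1}$ is the function field over $E_i$ of either a smooth projective quadric or a Severi--Brauer variety. Each such variety $X$ is smooth and geometrically integral over $E_i$. For quadrics this needs $\operatorname{char}\neq 2$ and a nondegenerate form of dimension at least $3$; otherwise the quadric is split and its function field is purely transcendental, or one simply drops that step. Hence $E_i(X)/E_i$ is a regular extension of transcendence degree $\dim X<\infty$, and a smooth variety has a separable transcendence basis for its function field. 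A finite tower of separably generated, finitely generated extensions is again separably generated and finitely generated. So $E/F$ is finitely generated and separable, with finite transcendence degree.

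I expect the main obstacle to be confirming that Merkurjev's field $E$ really has the tower form described, rather than being some non-finitely-generated limit. If the construction in \cite{merkurjev1996r} passes to a generic point or an infinite composite, I would go back to its proof. The obstruction there detects $G(E)/R\neq 1$ through a specific finite list of quadric and Severi--Brauer function fields, so a finite tower suffices. Beyond this, everything reduces to citing Proposition \ref{main-prop-sec1} and \cite{merkurjev1996r}.
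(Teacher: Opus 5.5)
Your proposal is correct and is essentially the paper's argument: Proposition \ref{main-prop-sec1} feeds into \cite[Proposition 11]{merkurjev1996r} to produce $\sigma$ on $A=M_n(Q)$, then \cite[Theorem 2]{merkurjev1996r} yields $E$ with $G(E)/R\neq 1$ for $G=\mathbf{PSim}_+(A,\sigma)$, and separable generation with finite transcendence degree follows from $E$ being a finite tower of function fields of quadrics and Severi--Brauer varieties. Your added justification (these function fields are regular extensions, hence separably generated, and this property is preserved in finite towers) spells out what the paper leaves implicit; your aside on low-dimensional quadrics is slightly off, since an anisotropic binary form gives a separable quadratic extension rather than a purely transcendental one, but this does not affect the conclusion.
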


    \begin{remark}
        One checks, by following Merkurjev's construction, that the transcendence degree of $E$ over $F$ can be bounded above by $3n+4$ in the above notation.
    \end{remark}

    The remainder of this section is devoted to the proof of Proposition \ref{main-prop-sec1}.

    \begin{lemma}\label{d-exists}
    Let $F$ be any global field of characteristic different from $2$, and denote by $\Omega_F$ the set of all places of $F$. Let $\Sigma$ be any finite subset of $\Omega_F$. Then there is an element $d\in F^{\times}\setminus F^{\times 2}$ such that $d\in F_v^{\times 2}$ for all $v\in \Sigma$.
\end{lemma}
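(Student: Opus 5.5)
Since $F_v^{\times 2}$ is open in $F_v^{\times}$ for every place $v$ (this uses $\operatorname{char}F\neq 2$ at the nonarchimedean places), the natural approach is weak approximation for the field $F$ itself. Complex places impose no condition, since $\mathbb{C}^{\times}=\mathbb{C}^{\times 2}$, so we may discard them from $\Sigma$. For a real place $v$ the condition is $d>0$ in $F_v=\mathbb{R}$, which is again an open condition. For a nonarchimedean $v\in\Sigma$, Hensel's lemma gives that $1+\pi_v^{m_v}\mathcal{O}_v\subseteq F_v^{\times 2}$ for $m_v$ large, namely $m_v>2v(2)$; in positive odd characteristic $m_v=1$ already suffices. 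Thus any $d\in F^\times$ that is $v$-adically close enough to $1$ for every $v\in\Sigma$ lies in $F_v^{\times 2}$ for all $v\in\Sigma$. The only real content is to force additionally $d\notin F^{\times 2}$.

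To arrange non-squareness, enlarge $\Sigma$ by one auxiliary nonarchimedean place $w\notin\Sigma$. Such a $w$ exists because $\Omega_F$ is infinite and $\Sigma$ is finite, and we may choose $w$ not dividing $2$. By the weak approximation theorem for the finitely many inequivalent absolute values in $\Sigma\cup\{w\}$ (as in the paper's reference to \cite[Theorem 1.4]{MR1278263}, which also covers archimedean places), choose $d\in F^\times$ with two properties. First, $d$ is close to $1$ at every $v\in\Sigma$, within the open neighbourhoods described above. Second, $d$ is close to a fixed element $u_w\in F_w^\times\setminus F_w^{\times 2}$, for instance a uniformizer $\pi_w$. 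Since $F_w^\times\setminus F_w^{\times 2}$ is open (it is a union of cosets of the open subgroup $F_w^{\times 2}$), being close to $\pi_w$ forces $d\notin F_w^{\times 2}$. Alternatively one can simply require $w(d)=1$.

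Then $d\notin F^{\times 2}$, because a square in $F$ would be a square in $F_w$. Also $d\in F_v^{\times 2}$ for all $v\in\Sigma$ by construction, which completes the proof. There is no serious obstacle. The only points needing care are the openness of $F_v^{\times 2}$ at dyadic places in characteristic $0$, which is handled by the Hensel bound $m_v>2v(2)$, and checking that the weak approximation theorem applies simultaneously to archimedean and nonarchimedean places, which is standard.
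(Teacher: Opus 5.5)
Your proposal is correct and follows the paper's proof essentially step for step. You adjoin an auxiliary place $w\notin\Sigma$, use openness of $F_v^{\times 2}$ together with weak approximation for $F$ to find $d$ close to $1$ on $\Sigma$ and close to a non-square at $w$, and conclude that $d\notin F_w^{\times 2}\supseteq F^{\times 2}$. Requiring $w$ to be nonarchimedean is a small improvement: the paper allows any $w\notin\Sigma$, and that choice fails if $w$ is complex, where $F_w^\times\setminus F_w^{\times 2}$ is empty.
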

\begin{proof}
    Let $w$ be a place of $F$ not lying in the set $\Sigma$, and consider any element $a = (a_v)_{v\in\Sigma\cup\{w\}}\in \prod_{v\in\Sigma\cup\{w\}}F_v$ satisfying: \begin{enumerate}
        \item $a_v = 1$ for $v\in \Sigma$;
        \item $a_w\in F_w^\times\setminus F_w^{\times 2}$.
    \end{enumerate}
    Since $F$ is a global field, $F_v^{\times 2}$ is a subgroup of finite index in $F_v$ for $v\in\Omega_F$. This, coupled with the fact that $\operatorname{char}F_v\neq 2$, implies that $F_v^{\times 2}$ is $v$-adically open (and closed) in $F_v^{\times}$. Therefore, there is a neighborhood $U\subset \prod_{v\in\Sigma\cup\{w\}}F_v$ containing $a$ such that for any $b = (b_v)\in U$, $b_v$ is a square in $F_v$ for $v\in \Sigma$ and $b_w$ is not a square in $F_w$.

    By weak approximation for $F$, there is an element $d\in F\cap U$. Therefore, $d$ is a square in $F_v$ for all $v\in \Sigma$, and $d$ is not a square in $F_w$. In particular, we conclude $d\notin F^{\times 2}$. This element $d$ satisfies the conditions in the lemma.
\end{proof}

Fix a global field $F$ with $\operatorname{char}F\neq 2$ in which $-1$ is a square.
Choose a finite subset $\Sigma$ of $\Omega_F$ of even cardinality containing some valuation $v$ such that the corresponding residue field $\kappa_v$ has characteristic different from $2$. It is always possible to do so, since for our choice of $F$, there are at most finitely many valuations having residue characteristic $2$.
Define $Q$ to be the quaternion algebra over $F$ that is ramified precisely at the places in $\Sigma$; by \cite[Theorem 2.7.5]{maclachlan2013arithmetic}, $Q$ is uniquely determined up to isomorphism. Fix also $d\in F^\times$ as in Lemma \ref{d-exists}. We will retain this notation for the remainder of this section.

\begin{lemma}\label{neg-norm}
    One has $d\in -\operatorname{Nrd}(Q)$.
\end{lemma}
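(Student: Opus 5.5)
Since $-1$ is a square in $F$, we have $-\operatorname{Nrd}(Q)=\operatorname{Nrd}(Q)$, so it suffices to show that $d$ is a reduced norm from $Q$. We use the Hasse--Schilling--Maass norm theorem: an element $c\in F^\times$ lies in $\operatorname{Nrd}(Q^\times)$ if and only if $c$ is positive at every real place where $Q$ ramifies. A global field in which $-1$ is a square has no real places, so every $c\in F^\times$ is a reduced norm. That alone would prove the lemma.

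To stay within the local data already set up, we can also argue place by place. Then $d\in\operatorname{Nrd}(Q)$ exactly when $d\in\operatorname{Nrd}(Q_v)$ for every place $v$ of $F$.

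At a place $v\notin\Sigma$, the algebra $Q_v\cong M_2(F_v)$ is split. Its reduced norm is the determinant, which is surjective onto $F_v^\times$, so $d\in\operatorname{Nrd}(Q_v)$.

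At a place $v\in\Sigma$, Lemma \ref{d-exists} gives $d=x^2$ with $x\in F_v^\times$. Since $x^2=\operatorname{Nrd}(x)$ for the scalar $x\in F_v\subset Q_v$, we get $d\in\operatorname{Nrd}(Q_v)$.

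Hence $d$ is everywhere locally a reduced norm. For quaternion algebras, the local-global principle for reduced norms follows from Hasse--Schilling. Alternatively, $c$ is a reduced norm of $Q=(a,b)$ if and only if the ternary form $\langle a,b,-ab\rangle\perp\langle -c\rangle$ is isotropic, and Hasse--Minkowski gives the local-global principle for that quadratic form. So $d\in\operatorname{Nrd}(Q)$, and therefore $d\in-\operatorname{Nrd}(Q)$ because $-1\in F^{\times2}$.

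The only real point is invoking the correct global norm theorem. The reduction via $-1$ being a square is trivial, and the local checks follow directly from Lemma \ref{d-exists} and the fact that $Q$ is split outside $\Sigma$.
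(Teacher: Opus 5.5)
Your proof is correct, and the first paragraph takes a different, stronger route than the paper. The paper uses Hasse--Schilling--Maass only as a local--global principle. It checks $-d\in\operatorname{Nrd}(Q_v)$ place by place, using at $v\in\Sigma$ that $d=c^2$ in $F_v$, so $-d=\operatorname{Nrd}(ic)$. Your second argument is essentially this, after first replacing $-d$ by $d$ via $-1\in F^{\times 2}$. Your first paragraph instead uses the full norm theorem: $F$ has no real places, so $\operatorname{Nrd}(Q^\times)=F^\times$. Hence the lemma holds for every $d\in F^\times$ and needs nothing from Lemma \ref{d-exists}. This shows that the only real content of Proposition \ref{main-prop-sec1} is $d\notin\operatorname{Nrd}(Q^+)$; the local-square property of $d$ matters only in Lemma \ref{not-pure-quat-norm}. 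Even in the paper's local check it is unnecessary, since the reduced norm of a quaternion division algebra over a nonarchimedean local field is surjective.

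One sentence of your proposal is wrong, although nothing else depends on it. The condition $c\in\operatorname{Nrd}(Q^\times)$ is \emph{not} equivalent to isotropy of the quaternary form $\langle a,b,-ab\rangle\perp\langle -c\rangle$. That form is isotropic if and only if $Q$ splits or $-c$ is the reduced norm of a nonzero pure quaternion. For instance, $c=1$ is always a reduced norm, yet $\langle a,b,-ab,-1\rangle\simeq-\langle 1,-a,-b,ab\rangle$ is anisotropic whenever $Q$ is division.

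For $c=d$ it is worse. Since $d\in F_v^{\times2}$, this form is anisotropic at every $v\in\Sigma$, which is exactly the phenomenon Lemma \ref{not-pure-quat-norm} exploits. So the Hasse--Minkowski route as you wrote it would fail. The correct form is the $5$-dimensional $\langle 1,-a,-b,ab,-c\rangle$. With it the Hasse--Minkowski argument is immediate: every $5$-dimensional form is isotropic over $\mathbb{C}$ and over every nonarchimedean local field of characteristic $\neq 2$. Since $F$ has no real places, the form is isotropic over $F$, which gives $\operatorname{Nrd}(Q^\times)=F^\times$ directly. Fix or delete that sentence.
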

\begin{proof}
    We will use the Hasse-Schilling-Maass Theorem \cite[Theorem 33.15]{reiner2003maximal}, which says that $x\in\operatorname{Nrd}(Q)$ if and only if $x\in\operatorname{Nrd}(Q_v)$ for all places $v$ of $F$. It therefore suffices to show that $-d\in\operatorname{Nrd}(Q_v)$ for each $v\in\Omega_F$.

   First suppose $v\notin \Sigma$, i.e., $Q$ is not ramified at $v$. Then, $Q_v\simeq \operatorname{M}_2(F_v)$, and $\operatorname{Nrd}(Q_v)\simeq F_v^\times$. Therefore, $-d\in\operatorname{Nrd}(Q_v)$.

   Next, assume $v\in \Sigma$. By construction, $d\in F_v^{\times 2}$. Let $c\in F_v$ be such that $c^2 = d$. Then, $-d = (ic)^2 = \operatorname{Nrd}(ic)\in\operatorname{Nrd}(Q_v)$, where $i\in F$ is such that $i^2 = -1$.
\end{proof}

\begin{lemma}\label{ab-vals}
    Given $v\in \Sigma$, there exist $a,b\in F$ with $Q\simeq (a,b)_F$ such that $v(b)=1$ and $v(a)=0$. 
\end{lemma}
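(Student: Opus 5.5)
The plan is to build the pair $(a,b)$ in two steps. First pick $a$ so that $F(\sqrt a)$ is a quadratic field embedding in $Q$ whose completion at $v$ is the unramified quadratic extension of $F_v$. Then read off $b$ and normalise its valuation by a square.

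First, since $-1$ is a square in $F$, the field $F$ has no real places. So every place in $\Sigma$, and in particular $v$, is finite. For each $w\in\Sigma$, let $F_w^{(2)}$ be the unramified quadratic extension of $F_w$. It has the form $F_w(\sqrt{u_w})$ with $u_w\in\mathcal{O}_w^\times\setminus F_w^{\times 2}$; in residue characteristic $2$ one can take $u_w=1+4c$ for a suitable unit $c$. As in the proof of Lemma \ref{d-exists}, $F_w^{\times 2}$ is open in $F_w^\times$, and so are $\mathcal{O}_w^\times$ and each square class. By weak approximation for $F$ we choose $a\in F^\times$ with $a\in u_wF_w^{\times 2}$ for all $w\in\Sigma$, and additionally with $a$ a unit at $v$ (take $a$ close to $u_v$). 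Then $a\notin F^{\times 2}$, $v(a)=0$, and $L=F(\sqrt a)$ satisfies $L\otimes_F F_w\simeq F_w^{(2)}$, a field, for every $w\in\Sigma$.

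Second, we use the local--global criterion for embedding quadratic extensions into quaternion algebras. A quadratic extension $L/F$ embeds in $Q$ if and only if $L\otimes_F F_w$ is a field for every place $w$ at which $Q$ ramifies; see e.g.\ \cite{maclachlan2013arithmetic}. By the first step this holds at every $w\in\Sigma$, so $L\hookrightarrow Q$. Hence $Q\simeq(a,b')_F$ for some $b'\in F^\times$, obtained by taking $j\in Q$ anticommuting with $\sqrt a$ and setting $b'=j^2$.

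Third, we read off the parity of $v(b')$. Over $F_v$ the algebra $Q_v\simeq(a,b')_{F_v}$ is a division algebra, and $a$ is a unit generating the unramified quadratic extension. The norm group of $F_v(\sqrt a)/F_v$ is $\mathcal{O}_v^\times\pi^{2\mathbb{Z}}$, and $(a,b')_{F_v}$ splits exactly when $b'$ lies in that group. So nonsplitting forces $v(b')$ to be odd, say $v(b')=2k+1$. Choose $\pi\in F$ with $v(\pi)=1$ and set $b=b'\pi^{-2k}$. Since $b$ differs from $b'$ by a square, $(a,b)_F\simeq(a,b')_F\simeq Q$, and $v(b)=1$ while $v(a)=0$.

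The main point requiring care is the first step when $v$, or some other $w\in\Sigma$, has residue characteristic $2$. There one must make sure the chosen square class really gives the unramified quadratic extension and that the local conditions are open. The openness argument of Lemma \ref{d-exists} handles this uniformly. Alternatively, if the lemma is only needed for the distinguished place with odd residue characteristic, one can note that the conditions at the other places only require $L_w$ to be a field, and any nonsquare class there works.
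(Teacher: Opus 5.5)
Your proof is correct, but it takes a different route from the paper's. The paper works purely locally at $v$ from an arbitrary presentation $Q\simeq(a,b)_F$. It uses the explicit tame Hilbert symbol formula at $v$ to rule out $v(a)$ and $v(b)$ both being even. It then normalizes the valuations using symmetry, scaling by powers of $\pi^2$, and the identity $(x,y)_F\simeq(-xy,y)_F$, which turns the case $(a_0\pi,b_0\pi)_F$ into $(-a_0b_0,b_0\pi)_F$. You instead build $a$ globally by weak approximation so that $F(\sqrt a)$ is the unramified quadratic extension at $v$ and a field at every ramified place. You then get $Q\simeq(a,b')_F$ from the local--global embedding criterion, and force $v(b')$ to be odd using the norm group $\mathcal{O}_v^\times\pi^{2\mathbb{Z}}$ of an unramified extension. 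The paper's argument is more elementary: it needs no global input beyond the existence of some presentation. However, the Hilbert symbol formula it quotes holds only in odd residue characteristic; at a dyadic place two units can span a nonsplit quaternion algebra, for instance when $F_v(\sqrt{a_0})$ is ramified. So the paper's proof really covers only non-dyadic $v$, which is all that Lemma~\ref{not-pure-quat-norm} uses. Your argument costs the embedding theorem (essentially the Hasse principle for the Brauer group) and the standard computation of norms from unramified extensions. In return it is uniform in the residue characteristic, so it proves the lemma for every $v\in\Sigma$, as literally stated.
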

\begin{proof}
    Let $(a,b)_F$ be any representation of $Q_F$. Let $\pi$ be a uniformizer corresponding to the place $v$. We may write $a = a_0\pi^{v(a)}, b= b_0\pi^{v(b)}$ for $v$-adic units $a_0,b_0$.
    We will first show that at least one of $v(a)$ and $v(b)$ is nonzero.
    
     Recall that for a place $u\in\Omega_F$ the Hilbert symbol at $u$ is defined as the function $h_u\colon F_u^\times\times F_u^\times\to \{\pm 1\}$, which sends the pair $(a,b)$ to $1$ if and only if the quaternion algebra $(a,b)_{F_u}$ is split, i.e., isomorphic to $\operatorname{M}_2(F_u)$.
     By \cite[12.4.8]{voight2021quaternion}, we have $h_v(a,b) = (-1)^{v(a)v(b)(q-1)/2}\left(\frac{a_0}{v}\right)^{v(b)}\left(\frac{b_0}{v}\right)^{v(a)}$, where $q$ is the cardinality of the residue field $\kappa_v$, and $\left(\frac{a_0}{v}\right)$ and $\left(\frac{b_0}{v}\right)$ denote the Legendre symbols. If $v(a) = 0 = v(b)$, then $h(a,b) = 1$, which would mean that $(a,b)_{F_v}$ is split, a contradiction to the fact that $Q$ is ramified at $v$.

    Since $(a,b)_F\simeq (b,a)_F$, we may assume without loss of generality that $v(b)\neq 0$. Throughout the rest of the proof, we will frequently use the fact that scaling $a$ or $b$ by the square of a unit in $F$ does not change the isomorphism class of $(a,b)_F$.
    If $v(b)<0$, then $v(\pi^{-2v(b)}b) = -v(b)>0$. Therefore, by replacing $b$ by $\pi^{-2v(b)}b$ if necessary, we have $Q\simeq (a,b)_F$ with $v(b)>0$.

    If $v(a)<0$, replace $a$ by $a\pi^{-2v(a)}$ to ensure $v(a)\geq 0$. Assume without loss of generality that $v(a)\leq v(b)$ (since $(a,b)_F\simeq (b,a)_F)$). Recall that we have already shown in the first part of the proof that the case $(a,b)_F\simeq (a_0,b_0)_F$ is not possible. So by scaling each argument by a suitable power of $\pi^{-2}$, we may assume that $(a,b)_F$ is isomorphic to either $(a_0,b_0\pi)_F$ or $(a_0\pi,b_0\pi)$.  In the first case, simply replace $a$ by $a_0$ and $b$ by $b_0\pi$. In the second case, use the isomorphism $(x,y)_F\simeq (-xy,y)_F$ to obtain $(a,b)_F\simeq (a_0\pi,b_0\pi)_F\simeq (-a_0b_0\pi^2,b_0\pi)_F\simeq (-a_0b_0,b_0\pi)_F$. Now replace $a$ by $-a_0b_0$ and $b$ by $b_0\pi$.
\end{proof}

\begin{lemma}\label{not-pure-quat-norm}
    The element $d\in F^\times$ is not the norm of a pure quaternion in $Q$.
\end{lemma}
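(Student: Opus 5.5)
We argue by contradiction and reduce to a statement about a single local field. Fix the place $v\in\Sigma$ whose residue field $\kappa_v$ has characteristic different from $2$, and use Lemma \ref{ab-vals} to write $Q\simeq (a,b)_F$ with $v(a)=0$ and $v(b)=1$. With standard generators $i,j$ of $Q$ satisfying $i^2=a$, $j^2=b$, $ij=-ji$, a pure quaternion is $x=x_1i+x_2j+x_3ij$, and
\[
\operatorname{Nrd}(x) = -a x_1^2 - b x_2^2 + ab x_3^2 .
\]
So $d\in\operatorname{Nrd}(Q^+)$ would mean that the ternary form $\langle -a,-b,ab\rangle$ represents $d$ over $F$. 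It then represents $d$ over $F_v$ as well. By the choice of $d$ in Lemma \ref{d-exists}, $d$ is a nonzero square in $F_v$, so after rescaling the form would represent $1$ over $F_v$. Equivalently, $\langle -1,-a,-b,ab\rangle$ would be isotropic over $F_v$. Since $-1$ is a square in $F$, this form is isometric to $\langle 1,-a,-b,ab\rangle$, which is exactly the norm form of $(a,b)_{F_v}$.

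The key observation is that this reduces everything to the statement that the norm form of $Q_v$ is anisotropic. That holds because $v\in\Sigma$, so $Q_v$ is a division algebra and $\operatorname{Nrd}(y)\neq 0$ for every $y\neq 0$. We get a contradiction, so $d\notin\operatorname{Nrd}(Q^+)$.

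The argument can be made more conceptual as follows. Since $d$ is a square in $F_v$ and $-1=i^2$, the element $d$ equals $-(i c)^2=\operatorname{Nrd}(ic)$ up to sign bookkeeping. A pure quaternion $x$ with $\operatorname{Nrd}(x)=d=c^2$ satisfies $x^2=-\operatorname{Nrd}(x)=-c^2$. Then $(x-ic)(x+ic)=x^2+c^2=0$ in $Q_v$, and this product has $x\pm ic\neq 0$ because $x$ is pure and $ic$ is a nonzero scalar. This contradicts $Q_v$ being a division algebra. In this form Lemma \ref{ab-vals} is not needed, and I would present this version, noting the anisotropy of the norm form as the underlying reason.

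The main point to check carefully is that $x$ is nonzero and lies in $Q^+$, and that $x\neq \pm ic$. The first holds since $d\neq 0$. The second holds because the scalars meet $Q^+$ only in $0$, and $ic\neq 0$. A further check is that the residue characteristic condition is not actually needed for this argument, only that $Q$ ramifies at some place of $\Sigma$. I expect no serious obstacle beyond this sign bookkeeping with $\operatorname{Nrd}(x)=-x^2$ for pure quaternions.
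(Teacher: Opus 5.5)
Your proof is correct, and it takes a genuinely different route from the paper's.

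The paper works at a place $v\in\Sigma$ of odd residue characteristic. It uses Lemma~\ref{ab-vals} to normalize $Q\simeq(a,b)_F$ with $v(a)=0$ and $v(b)=1$, then clears denominators in $d=-ax^2-by^2+abz^2$ to get a primitive integral solution and reduces modulo $\mathfrak{m}_v$. A case analysis on which of $X,Y,Z$ is a unit, using that $v(d)$ is even, always forces $a$ to be a square in $\kappa_v$, which contradicts the ramification of $Q$ at $v$.

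You instead use only that $x^2=-\operatorname{Nrd}(x)$ for $x\in Q^+$. If $\operatorname{Nrd}(x)=d=c^2$ in $F_v$, then $(x-\sqrt{-1}\,c)(x+\sqrt{-1}\,c)=0$ in $Q_v$. Both factors are nonzero because $x\neq 0$ and $Q_v^+\cap F_v=0$, so $Q_v$ has zero divisors.

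What your approach buys:
\begin{itemize}
\item It is shorter and coordinate-free, and it works at any place of $\Sigma$.
\item It shows that, for this lemma, Lemma~\ref{ab-vals} and the requirement that $\Sigma$ contain a place of residue characteristic different from $2$ are unnecessary.
\item It proves a general fact: in a quaternion division algebra over a field $L$ of characteristic not $2$, the norm of a nonzero pure quaternion never lies in $-L^{\times 2}$. When $\sqrt{-1}\in L$, this is the same as $L^{\times 2}$.
\end{itemize}
The paper's route is an explicit hands-on computation with the norm form, but it pays for this with the extra normalization step and the residue-characteristic restriction.

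Two cosmetic points for your write-up. First, use a different symbol for $\sqrt{-1}\in F$ than for the quaternion generator $i$. Second, $\operatorname{Nrd}(\sqrt{-1}\,c)=-c^2=-d$, not $d$; this is exactly the content of Lemma~\ref{neg-norm} at $v\in\Sigma$. That slip occurs only in your heuristic remark and does not affect the actual argument.
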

\begin{proof}
    Fix a valuation $v\in \Sigma$ with uniformizer $\pi$ such that the residue field $\kappa_v$ has characteristic different from $2$. By Lemma \ref{ab-vals}, there exist $a,b\in F$ with $Q\simeq (a,b)_F$ and such that $v(a) = 0$ and $v(b)=1$.
    Suppose $d = \operatorname{Nrd}(xI + yJ + zK) = -ax^2-by^2+abz^2$ for some $x,y,z\in F$. By multiplying throughout by a suitable $n$-th power of $\pi^2$, we obtain an equation of the form
    \begin{equation}\label{normform}
        D = -aX^2-bY^2 + abZ^2,
    \end{equation} where $D = d\pi^{2n}$, and $X = x\pi^{n},Y = y\pi^{n}$ and $Z = z\pi^{n}$ are in $\mathcal{O}_v$ and at least one of them is not in $\mathfrak{m}_v$. 
    
    Reducing the equation modulo $\mathfrak{m}_v$, we obtain $D\equiv -aX^2(\operatorname{mod}\mathfrak{m}_v)$. First assume $X\in\mathcal{O}_v^\times$. Since $d$ is a square in $F_v$, so is $D$, and let $C\in F_v$ be such that $C^2 = D$. Then we find $(iC/X)^2\equiv a(\operatorname{mod}\mathfrak{m}_v)$, which would mean that $a$ is a square modulo $\mathfrak{m}_v$, which contradicts the fact that $Q_{F_v}$ is not split. (Here, as before, $i$ is a square root of $-1$ in $F$.)

    So $v(X)>0$, and either $Y$ or $Z$ has zero valuation. Suppose it is $Y$. The case $Z\in\mathcal{O}_v^\times$ is similar. Since $v(b) = 1$, let $b = b_0\pi$ for $b_0\in\mathcal{O}_v^\times$. The congruence $D\equiv -aX^2\operatorname{mod}\mathfrak{m}_v$ gives $D\equiv X^2\equiv 0\operatorname{mod}\mathfrak{m}_v$. Let $D = D_0\pi^k, X= X_0\pi^j$ for $D_0,X_0\in\mathcal{O}_v^\times$ and $k,j\geq 1$. On the other hand, since $D = d\pi^{2n}$, one has $k = v(D) = v(d) + 2n$, which is even because $d$ is a square in $F_v$ (and hence $v(d)\in 2\mathbb{Z}$). Therefore, $k\geq 2$. Substituting these in (\ref{normform}), we obtain:\begin{equation}
        D_0\pi^k = -aX_0^2\pi^{2j} - b_0\pi Y^2 + ab_0\pi Z^2.
    \end{equation}
    Dividing throughout by $\pi$, we get
    \begin{equation}
        D_0\pi^{k-1} = -aX_0^2\pi^{2j-1} - b_0Y^2 + ab_0Z^2;
    \end{equation}
    and reducing this modulo $\mathfrak{m}_v$ gives:\begin{equation}
        b_0Y^2\equiv ab_0Z^2\operatorname{mod}\mathfrak{m}_v.
    \end{equation}
    Since $b_0\in\mathcal{O}_v^\times$, we can multiply both sides by $b_0^{-1}$ to obtain $Y^2\equiv aZ^2\operatorname{mod}\mathfrak{m}_v$. Since $v(Y) = 0$ by assumption and since $v(a)=0$, $Z\not\equiv 0\operatorname{mod}\mathfrak{m}_v$ i.e. $Z\in\mathcal{O}_v^\times$ and so we find $(Y/Z)^2\equiv a\operatorname{mod}\mathfrak{m}_v$, which would mean that $a$ is a square modulo $\mathfrak{m}_v$, again a contradiction.
\end{proof}

Proposition \ref{main-prop-sec1} now follows from Lemmas \ref{d-exists}, \ref{neg-norm} and \ref{not-pure-quat-norm}.

\section{Failure of weak approximation}
Fix a global field $F$ having characteristic different from $2$ and in which $-1$ is a square. Fix $G$ and $E$ as in Proposition \ref{merk-psim}.

\begin{lemma}\label{efg}
        For $F,G$ and $E$ as above, one has $G(F)/R = 1$ and $G(E)/R\neq~ 1$.
    \end{lemma}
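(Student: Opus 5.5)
The second assertion, $G(E)/R\neq 1$, is exactly the content of Proposition \ref{merk-psim}, so the plan concentrates on showing $G(F)/R=1$. Here $G=\mathbf{PSim}_+(A,\sigma)$ with $A=M_n(Q)$, $n$ odd, and $F$ a global field.

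I would use Merkurjev's computation of $R$-equivalence classes for adjoint groups of type $D_n$ from \cite{merkurjev1996r}. For $G=\mathbf{PSim}_+(A,\sigma)$ it gives
\[
G(F)/R\;\simeq\; G(F)/\bigl(\operatorname{Hyp}(A,\sigma)\cdot F^{\times 2}\bigr),
\]
where $G(F)$ is identified with the group of multipliers of proper similitudes, i.e.\ a subgroup of $F^\times$, and $\operatorname{Hyp}(A,\sigma)$ is the subgroup generated by the norm groups $N_{L/F}(L^\times)$ over finite extensions $L/F$ for which $(A,\sigma)_L$ is hyperbolic. Since $n$ is odd, the multiplier of any similitude is a reduced norm of $A$, so the multiplier group lies in $\operatorname{Nrd}(A)=\operatorname{Nrd}(Q)$. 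It therefore suffices to show
\[
\operatorname{Nrd}(Q)\subseteq \operatorname{Hyp}(A,\sigma)\cdot F^{\times 2}.
\]

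For this I would use arithmetic. Over a global field every nonsplit quadratic extension $L=F(\sqrt{c})$ splitting $Q$ makes $A_L$ split, so $\sigma_L$ becomes adjoint to a quadratic form of dimension $2n$ and discriminant $d$. I would choose $L=F(\sqrt{d})$: then the discriminant becomes trivial. I would then choose $L$, or a small family of such $L$, so that the form is also hyperbolic locally, hence globally by Hasse--Minkowski. Norm groups $N_{L/F}(L^\times)$ of such extensions, for varying $L$, should generate everything: by the Hasse norm theorem together with weak approximation, any $x\in\operatorname{Nrd}(Q)$ can be written as a product of norms from two suitable quadratic extensions.

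A cleaner route, if available, is to cite the known result that adjoint classical groups over number fields, and more generally over global fields, are $R$-trivial. This follows from Gille's theorem that $G(F)/R$ is trivial for semisimple adjoint groups over global fields with weak approximation, or from the finiteness-and-triviality results for $G(F)/R$ over global fields. I expect the main obstacle to be justifying $G(F)/R=1$ rigorously for characteristic $p>2$ global fields. I would first try to cite Gille's result for arbitrary global fields. If that fails, I would fall back on the explicit norm computation above using the Hasse principle for hyperbolicity of involutions.
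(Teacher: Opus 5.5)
Your argument for $G(E)/R\neq 1$ is fine: it is just Proposition~\ref{merk-psim}. The argument for $G(F)/R=1$, however, has a genuine gap.

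\textbf{The explicit fallback cannot work.} The inclusion you reduce to, $\operatorname{Nrd}(Q)\subseteq \operatorname{Hyp}(A,\sigma)\cdot F^{\times 2}$, is false here.
\begin{itemize}
\item If $\sigma_L$ is hyperbolic, then $\operatorname{disc}(\sigma_L)$ is trivial. So $d\in L^{\times 2}$ and $F(\sqrt{d})\subseteq L$.
\item Hence every $N_{L/F}(L^\times)$ generating $\operatorname{Hyp}(A,\sigma)$, and also $F^{\times 2}$, lies in $N_{F(\sqrt{d})/F}(F(\sqrt{d})^\times)$.
\item Since $d\notin F^{\times 2}$, this is a proper subgroup of $F^\times$. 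For example, it misses elements that are local non-norms at a place inert in $F(\sqrt{d})$.
\item On the other hand, $-1$ is a square, so $F$ has no real places, and Hasse--Schilling--Maass gives $\operatorname{Nrd}(Q)=F^\times$.
\end{itemize}
So your bound ``multipliers lie in $\operatorname{Nrd}(Q)$'' carries no information, and it discards exactly the constraint coming from the discriminant.

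Your concrete choice $L=F(\sqrt{d})$ also fails. The element $d$ was chosen to be a square in $F_v$ for every $v\in\Sigma$. So every ramified place of $Q$ splits in $F(\sqrt{d})$, and $Q\otimes_F F(\sqrt{d})$ is still a division algebra.

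More generally, no quadratic extension can work. A hyperbolic $\sigma_L$ needs a right ideal of reduced dimension $n$ in $A_L$. Since $n$ is odd and $\operatorname{ind}(A_L)\in\{1,2\}$, this forces $A_L$ to be split. So any $L$ making $\sigma$ hyperbolic must both contain $F(\sqrt{d})$ and split $Q$, and therefore has degree at least $4$ over $F$. Writing elements as products of norms from two quadratic extensions does not touch this. A correct direct computation would need a sharper description of the multiplier group $G_+(A,\sigma)$ and hyperbolicity over such larger extensions. Neither is in your plan.

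\textbf{The citation route is right in spirit but is only a placeholder.} Citing a known theorem is what the paper does. As written, though, your ``Gille theorem'' is not pinned down to a checkable statement, and you leave the function field case open.

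The missing observation is that $F$ has cohomological dimension $2$ in every case:
\begin{itemize}
\item In the number field case, $\sqrt{-1}\in F$ forces $F$ to be totally imaginary, so $\operatorname{cd}F=2$.
\item Global function fields of characteristic $\neq 2$ also have $\operatorname{cd}F=2$.
\end{itemize}
With this, the paper applies Kulshrestha--Parimala \cite[Theorem 3.7]{kulshrestha2008} on adjoint classical groups over fields of cohomological dimension $2$ to get $G(F)/R=1$. This covers positive characteristic uniformly, which removes the obstacle you flagged.
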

    \begin{proof}
         We only need to show $G(F)/R = 1$. Since $F$ is either a totally imaginary number field or of the form $\mathbb{F}_q(t)$ with $q\neq 2$, it has cohomological dimension $2$ (see \cite[Proposition 8.3.17, Theorem 10.1.11]{MR1737196}), and so we may use \cite[Theorem 3.7]{kulshrestha2008} to conclude $G(F)/R=~1$.
    \end{proof}

Our next goal is to show that the base change of $G$ to a suitable overfield of $F$ fails weak approximation. We begin with the following lemma:
    
    \begin{lemma}\label{val-fld}
        There is a field $K$ containing $F$, and a valuation $v$ on $K$ with corresponding completion $K_v$, such that $G(K)/R = 1$ and $G(K_v)/R\neq 1$.
    \end{lemma}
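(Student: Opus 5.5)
We need a field $K \supseteq F$ with a discrete valuation $v$ such that $G(K)/R = 1$ while $G(K_v)/R \neq 1$. The idea is to take $K = E((t))$-like objects, or better, to use the Laurent series field trick. By Proposition \ref{merk-psim}, $E$ is separably generated of finite transcendence degree over $F$. Write $E$ as a finite separable extension of a purely transcendental field $F(t_1,\dots,t_m)$.

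\textbf{Constructing $K$ and $v$.} Set $K = F(t_1,\dots,t_m)(x)$ for a primitive element $x$, where $K$ is made ``generic'' by adjoining the coefficients of its minimal polynomial as indeterminates. This is awkward. A cleaner plan is as follows.

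\begin{itemize}
\item Let $E_0 = E(s)$, with the valuation $v$ being the $s$-adic one, so that $K_v = E((s))$.
\item Find a subfield $K \subseteq E((s))$, dense in it and with the restricted valuation, such that $K$ has cohomological dimension small enough, or otherwise has $G(K)/R = 1$.
\end{itemize}

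This is also hard. Instead I would aim for $G(K)/R = 1$ via the following specialization-free route.

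\textbf{Choice of $K$.} Since $E/F$ is separable of finite transcendence degree, $E$ is the function field of a smooth $F$-variety $X$. Pick an $F$-point-free approach: take $K = F(X)$-related fields. Concretely, I would realize $E$ as the residue field of a discrete valuation on a field $K$ whose completion is $E((s))$, with $K$ of the form $F(u_1,\dots,u_r)$ modulo the obstacle, chosen so that $G(K)/R = 1$.

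The natural choice is $K = F(s)$-extension embedding $E$ into the completion. Let $E = F(t_1,\dots,t_m,\theta)$ with $\theta$ separable over $F(t_1,\dots,t_m)$. Approximate the transcendentals by powers of a uniformizer: define $K = F(s)$ and embed $E$ into $F((s))$. This is possible because $F((s))$ has infinite transcendence degree over $F$; we send the $t_i$ to algebraically independent series. Then $\theta$ needs a root in $F((s))$, which Hensel's lemma gives if its minimal polynomial has a simple root mod $s$ after a suitable choice of the $t_i$ images. Choose $t_i \mapsto c_i + (\text{transcendental tail})$ with $c_i \in F$ such that the specialized polynomial has a simple root in $F$. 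Such $c_i$ exist if the separable polynomial has $F$-rational specializations with simple roots, which holds by Hilbert-irreducibility-type density or by replacing $F$ with a finite extension.

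This yields $K = F(s)$ with $v$ the $s$-adic valuation and $K_v = F((s)) \supseteq E$. We have $G(K)/R = 1$ because $F(s)$ is a rational function field over $F$, and $G(F)/R = 1$ by Lemma \ref{efg}; $R$-triviality is preserved under purely transcendental extension, since $G(F(s))/R = G(F)/R$.

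\textbf{Nontriviality over $K_v$.} It remains to show $G(F((s)))/R \neq 1$ given $G(E)/R \neq 1$ with $E \subseteq F((s))$. This is the main obstacle, because $R$-nontriviality does not obviously ascend along $E \subset F((s))$. I would try two things.

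\begin{itemize}
\item Use that $F((s))$ is a limit of fields that are ``generic'' over $E$. Since $F((s))$ is a regular extension of $E$ when the embedding is chosen generically, a specialization or norm argument gives injectivity of $G(E)/R \to G(F((s)))/R$ for adjoint groups, via Merkurjev's invariant description of $G(\cdot)/R$.
\item Alternatively, replace $K_v$ by $E((s))$ and $K$ by $F(s,t_1,\dots)$ with residue field $E$, so that $G(E((s)))/R \supseteq G(E)/R$ via the residue map and Hensel lifting. In this version $K$ is $F(t_1,\dots,t_m,s)$ adjoined with $\theta$, and $K$ must still be shown $R$-trivial.
\end{itemize}

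Balancing these two requirements — making $K$ close enough to rational over $F$ that $G(K)/R = 1$, while making the completion large enough to contain a field where $R$-equivalence fails — is the crux.
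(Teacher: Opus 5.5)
Your proposal does not reach a proof, and the construction you settle on cannot be repaired. With $K=F(s)$ and the $s$-adic valuation, the completion $K_v=F((s))$ has residue field $F$, and $G$ is defined over $F\subseteq F[[s]]$. Gille's specialization theorem for $R$-equivalence over a complete discrete valuation ring then gives $G(F((s)))/R\cong G(F)/R=1$. This is the same residue-field principle you appeal to in your second bullet, and the one the paper uses. So the step you flag as the ``main obstacle'' is not just hard but false: for any $F$-embedding $E\subseteq F((s))$, the map $G(E)/R\to G(F((s)))/R$ kills every nontrivial class, and no regularity or norm argument can make it injective. The embedding itself is also unjustified, since Hilbert irreducibility makes generic specializations of the minimal polynomial of $\theta$ stay irreducible rather than acquire simple roots in $F$. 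Your fallback $K=E(s)$ fails the other requirement, since $G(E(s))/R\cong G(E)/R\neq 1$.

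The missing idea is to realize $E$ as the residue field of a \emph{non-$t$-adic} discrete valuation on a purely transcendental extension of $F$. Write $E=F'(\alpha)$ with $F'=F(e_1,\dots,e_m)$ purely transcendental over $F$ and $\alpha$ separable over $F'$ with minimal polynomial $f_\alpha$; this is possible because $E/F$ is separably generated of finite transcendence degree. Put $K=F'(t)$ with $t$ a new indeterminate, and let $v$ be the valuation attached to the prime ideal $(f_\alpha(t))$ of $F'[t]$. Then $\kappa_v=F'[t]/(f_\alpha)\cong E$. Since $K$ is purely transcendental over $F$, homotopy invariance gives $G(K)/R\cong G(F)/R=1$. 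Since $F\subseteq\mathcal{O}_v$, the specialization theorem gives $G(K_v)/R\cong G(\kappa_v)/R\cong G(E)/R\neq 1$. Your paragraph about realizing $E$ as a residue field of some $F(u_1,\dots,u_r)$ was pointing exactly here. Your first ``awkward'' attempt was one step away: the primitive element should not be adjoined to $K$; instead, its minimal polynomial should define the valuation on $F'(t)$.
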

    \begin{proof}
        Choose a separating transcendence basis $\{e_1,\ldots,e_n\}$ of $E$ over $F$. Then the field $F' = F(e_1,\ldots,e_n)$ is a purely transcendental extension over $F$ while $E$ is a separable algebraic extension of $F'$.
        By the Primitive Element Theorem, $E = F'(\alpha)$ for some $\alpha\in E$.

        Let $f_\alpha\in F'[t]$ be the minimal polynomial of the element $\alpha$. Then we may endow the field $K = F'(t)$ with valuation $v$ induced by the prime ideal $(f_\alpha)$ in $F'[t]$. Observe that the corresponding residue field $\kappa_v$ is isomorphic to $E$. Denote by $K_v$ the completion of $K$ with respect to $v$. Observe that $G(K_v)/R\simeq G(\kappa_v)/R\simeq G(E)/R\neq 1$, where the first isomorphism comes from \cite[Theorem 8.14]{gille2025r}, applied with $A = \mathcal{O}_v\supseteq F$. On the other hand, $G(K)/R\simeq G(F')/R\simeq G(F)/R = 1$ by homotopy invariance (see \cite[p.1]{gille2010lectures}).
    \end{proof}

    The next two lemmas prepare the key step in our argument: we show that the $R$-equivalence class of any $g\in G(K_v)$ is $v$-adically open. This means, in particular, that a $K_v$-rational point $g$ lying outside the $R$-equivalence class of 1 cannot be approximated by points $R$-equivalent to $1$.

    \begin{lemma}\label{raghunathan}
        Let $k$ be a field and let $\mathcal{G}$ be a connected reductive group over $k$. Then for any $g\in \mathcal{G}(k)$ there is a $k$-rational variety $Y$ and a $k$-morphism $b\colon Y\to \mathcal{G}$ which is \'{e}tale over $g$.
    \end{lemma}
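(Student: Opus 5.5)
The plan is to use the big cell of a Bruhat-type decomposition, translated by $g$. Suppose first that $\mathcal{G}$ is quasi-split or simply has a proper parabolic over $k$. Then for a minimal $k$-parabolic $P$ with opposite $P^-$ and common Levi $L$, the product map $R_u(P^-)\times L\times R_u(P)\to\mathcal{G}$ is an open immersion. The unipotent radicals are $k$-isomorphic to affine spaces, but $L$ need not be rational, so this does not directly give a rational $Y$. For this reason I would follow Raghunathan's approach and work with a maximal $k$-torus instead.

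Concretely, choose a maximal $k$-torus $T\subset\mathcal{G}$. Its Lie algebra $\mathfrak{t}$ satisfies $\mathfrak{g}=\mathfrak{t}\oplus\mathfrak{m}$ for a $T$-stable complement $\mathfrak{m}$, namely the sum of the root spaces, which is defined over $k$. Consider the morphism
\[
b\colon Y=\mathfrak{m}\times\mathfrak{t}\longrightarrow \mathcal{G},
\]
built from a Cayley-type map, or more simply the map $\mathfrak{m}\times T\to\mathcal{G}$, $(X,t)\mapsto c(X)\,t\,c(X)^{-1}$. The target of this map is wrong, however, since conjugation only sweeps out $T$-orbits of semisimple elements. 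The cleaner choice is Raghunathan's: for a regular semisimple $g$, the map $\mathcal{G}/T\times T\to\mathcal{G}$, $(xT,t)\mapsto xtx^{-1}$, is étale at $(1,g)$. Here $\mathcal{G}/T$ is $k$-rational, since it contains the open subset $U^-\times U$ over $\bar k$ descended appropriately (the Chevalley result that $\mathcal{G}/T$ is rational). The torus $T$, however, need not be rational.

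To remove the dependence on $T$ and on $g$ being semisimple, I would instead use translation. Because $\mathcal{G}(k)$ acts on $\mathcal{G}$ by $k$-automorphisms (left multiplication by $g$), it suffices to produce a rational $Y$ with a morphism étale over the identity $1$; composing with $x\mapsto gx$ then gives a morphism étale over $g$. For the identity, a standard choice works. Take a $k$-rational variety $Y$ together with $b\colon Y\to\mathcal{G}$ with $b(y_0)=1$ and $db$ surjective at $y_0$, with $\dim Y=\dim\mathcal{G}$. Such a map can be obtained as a product of one-parameter pieces: pick $k$-morphisms $\phi_i\colon\mathbb{A}^1\to\mathcal{G}$ with $\phi_i(0)=1$, whose tangent vectors $\phi_i'(0)$ span $\mathfrak{g}$, and set $b(s_1,\dots,s_N)=\phi_1(s_1)\cdots\phi_N(s_N)$ on $Y=\mathbb{A}^N$ with $N=\dim\mathcal{G}$. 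The differential at $0$ is $(s_i)\mapsto\sum s_i\phi_i'(0)$, an isomorphism, so $b$ is étale at $0$. To get étaleness over the whole fibre $b^{-1}(1)$ rather than just at one point, I would shrink $Y$ to the open étale locus of $b$ (still rational) after checking that it meets the fibre, or simply interpret ``étale over $g$'' as étale at a $k$-point over $g$, which suffices for the later implicit-function argument.

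The main obstacle is producing the curves $\phi_i$, that is, showing that $k$-rational curves through $1$ have tangents spanning $\mathfrak{g}$ when $k$ is arbitrary and $\mathcal{G}$ may be anisotropic (so root subgroups are unavailable). I would use the Cayley map. For a maximal $k$-torus $T$ there is the birational $k$-map $\mathfrak{t}\dashrightarrow T$, $X\mapsto(1+X)(1-X)^{-1}$, for $\mathrm{GL}_n$-embedded groups. In general I would instead use that $T$ is unirational, being a quotient of a quasi-trivial torus, giving rational curves in $T$ through $1$ whose tangents span $\mathfrak{t}$. Maximal $k$-tori exist in abundance, and the Lie algebras $\mathfrak{t}$ of varying $k$-tori span $\mathfrak{g}$ because regular semisimple elements are Zariski dense in $\mathfrak{g}$ and $k$ is infinite here. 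In the finite field case $\mathcal{G}$ is quasi-split, and root subgroups $\cong\mathbb{A}^{m}$ together with a torus handle it directly. This density step is where I expect the real work.
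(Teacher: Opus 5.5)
Once the abandoned first attempts (the big cell, and $\mathcal{G}/T\times T\to\mathcal{G}$) are set aside, your argument is correct. It uses the same three ingredients as the paper: reduction to $g=1$ by left translation, maximal $k$-tori whose Lie algebras span $\mathfrak{g}$, and rationality obtained by covering a torus by a quasi-trivial one. You assemble them differently, though.

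The paper starts from Raghunathan's map $T^n\to\mathcal{G}$, essentially $(t_i)\mapsto\prod_i g_it_ig_i^{-1}$, built from $\mathcal{G}(k)$-conjugates of a \emph{single} maximal torus. It lifts this map through a flasque resolution $Q^n\to T^n$ and slices $Q^n$ by a linear subspace mapping isomorphically onto $\operatorname{Lie}(T)^n$. \'{E}taleness of the composite then rests on the cited fact that $T^n\to\mathcal{G}$ is already \'{e}tale at $1$.

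You instead take rational curves through $1$ in various maximal $k$-tori, namely images of lines in quasi-trivial covers. Their tangents span $\mathfrak{g}$ because the tori's Lie algebras do, by density of regular semisimple $k$-points. You choose $N=\dim\mathcal{G}$ of these curves with tangents forming a basis and multiply them. The differential at $0$ is then visibly an isomorphism, so the dimension match is built in and no external lemma is needed.

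In exchange, you should make three points explicit.
\begin{itemize}
\item Your $\phi_i$ cannot be morphisms $\mathbb{A}^1\to\mathcal{G}$, since any morphism from $\mathbb{A}^1$ to a torus is constant. They are rational maps defined at $0$, so $Y$ must be the open subset of $\mathbb{A}^N$ on which the product is defined. This $Y$ is still rational, and an open subset of affine space is exactly what the later $R$-equivalence argument uses.
\item In positive characteristic the cover $Q\to T$ must be smooth, e.g.\ with torus kernel as in the flasque resolution the paper uses. Otherwise tangent vectors need not lift.
\item The regular-semisimple density step needs characteristic $0$ or a suitable positive characteristic. It fails for $\mathrm{SL}_2$ in characteristic $2$, where every $\operatorname{Lie}(T)$ is the center. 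The paper's identity $\operatorname{Lie}(\mathcal{G})=\sum_i\operatorname{Ad}(g_i)\operatorname{Lie}(T)$ has the same restriction, and it does not matter for the application to type $D_n$ in characteristic $\neq 2$.
\end{itemize}

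Your remark about shrinking $Y$ to the \'{e}tale locus is harmless. Like the paper, you really only need \'{e}taleness at one $k$-point over $g$.
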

    \begin{proof}
        It suffices to prove the lemma for $g=1$. Indeed, once we know that $b\colon Y\to\mathcal{G}$ is \'{e}tale over $1$, we can compose it with left translation by any $g\in\mathcal{G}(k)$ to obtain a map $b_g\colon Y\to\mathcal{G}$ that is \'{e}tale over $g$.

        We begin with a result due to Raghunathan \cite[\S 1.2]{raghunathan1994principal} (see also \cite[Proposition 3.7(1)]{gille2026local}): Let $T$ be a maximal torus of $\mathcal{G}$. Then there is $n\in\mathbb{N}$ and a $k$-morphism of varieties $f:T^n = T\times \cdots\times T\to \mathcal{G}$ that is \'{e}tale over the identity in $\mathcal{G}$. This comes from the isomorphism $\operatorname{Lie}(\mathcal{G})(k) = \mbox{}^{g_1}\operatorname{Lie}(T)(k) + \mbox{}^{g_2}\operatorname{Lie}(T)(k) + \cdots + \mbox{}^{g_n}\operatorname{Lie}(T)(k)$, for elements $g_i\in \mathcal{G}(k)$.

        Now let $1\to S\to Q\to T\to 1$ be a flasque resolution of $T$, where $S$ is a flasque torus and $Q$ is a quasitrivial torus (see \cite[Th\'{e}or\`{e}me 2]{colliot1977r}). The torus $Q$ is a Zariski open subset of an affine space $\mathbb{A}^m$, where $m = \dim_k Q$. Denote by $W$ the Lie algebra of $Q$. Then $W(k)\simeq k^m$.
        The epimorphism $Q\to T$ from the flasque resolution above induces an epimorphism $h\colon Q^n\to T^n$ of $k$-varieties. The differential $dh\colon W(k)^n\simeq k^{mn}\to \operatorname{Lie}(T)(k)^n$ is a surjective homomorphism of vector spaces. Observe $\mathbb{A}^{mn}\simeq \operatorname{Spec(Sym(}(W(k)^n)^\lor))$, where $\operatorname{Sym}((W(k)^n)^\lor)$ is the symmetric algebra on the dual space of $W(k)^n$. Let $V$ be a subspace of $W(k)^n$ such that the restriction of $dh$ to $V$ is an isomorphism.
        Denote by $Z$ the affine variety $\operatorname{Spec(Sym}(V^\lor))$. Notice that $Z$ is in fact isomorphic to an affine space $\mathbb{A}^r\subset\mathbb{A}^{mn}$, where $r = \dim_k V$.
        Let $Y:= Q^n\cap Z$, where the intersection takes place in $\mathbb{A}^{mn}\simeq \operatorname{Spec(Sym(}(W(k)^n)^\lor))$. Then $Y$, being an open subset of the affine space $Z$ is rational as a variety, and $h|_Y\colon Y\to T^n$ is \'{e}tale over $1$.

        The composition of $h|_Y$ followed by $f$ is the required map $b\colon Y\to \mathcal{G}$.
    \end{proof}

    \begin{lemma}\label{r-eq-open}
        Let $\mathcal{G}$ be a connected reductive group over a complete valued field $k$. Then, for any $g\in \mathcal{G}(k)$, there is an adic neighborhood $U$ of $g$ which is contained in the $R$-equivalence class of $g$.
    \end{lemma}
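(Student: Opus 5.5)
First reduce to $g=1$: left translation by $g$ is a $k$-automorphism of the variety $\mathcal{G}$, hence a homeomorphism of $\mathcal{G}(k)$ in the adic topology. It also preserves $R$-equivalence, since composing a rational curve $f\colon\mathbb{A}^1_k\dashrightarrow\mathcal{G}$ with left translation by $g$ gives another rational curve. So it carries a neighborhood of $1$ contained in the class of $1$ onto a neighborhood of $g$ contained in the class of $g$. Alternatively, one can apply Lemma \ref{raghunathan} directly at $g$, using the map $b_g$ from its proof.

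Now take the $k$-rational variety $Y$ and the $k$-morphism $b\colon Y\to\mathcal{G}$ of Lemma \ref{raghunathan}, étale over $1$. From the construction in that lemma, $Y$ is an open subset of an affine space $Z\simeq\mathbb{A}^r$. It has a $k$-point $y_0$ with $b(y_0)=1$: the identity of $Q^n$ lies in $Z$ under the chosen linear identification, or one simply picks any $k$-point of the fibre. Since $b$ is étale at $y_0$, the $k$-analytic inverse function theorem over the complete field $k$ applies. There are adic open neighborhoods $V\ni y_0$ in $Y(k)$ and $U\ni 1$ in $\mathcal{G}(k)$ such that $b$ restricts to a homeomorphism $V\to U$. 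Concretely, étaleness gives that $db_{y_0}$ is an isomorphism of tangent spaces. In local coordinates (embed $\mathcal{G}$ affinely and use the smooth chart at $1$), the Hensel-type implicit function theorem for convergent power series, or Newton iteration, gives the local analytic inverse. Shrinking $V$, I may assume $V$ is a polydisc around $y_0$ in $k^r$, contained in the open set $Y(k)\subset k^r$; this is possible since $Y$ is Zariski open, hence adically open.

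Then every $u\in U$ is $R$-equivalent to $1$. Write $u=b(y)$ with $y\in V$ and consider the line $\ell(t)=y_0+t(y-y_0)$ in $Z\simeq\mathbb{A}^r$. It is a $k$-morphism $\mathbb{A}^1_k\to Z$ with $\ell(0)=y_0\in Y$ and $\ell(1)=y\in Y$. Since $Y$ is open in $Z$, $\ell^{-1}(Y)$ is a nonempty open subset of $\mathbb{A}^1$ containing $0$ and $1$. So $b\circ\ell\colon\mathbb{A}^1_k\dashrightarrow\mathcal{G}$ is a rational map defined at $0$ and $1$, with values $1$ and $u$. Hence $u$ is $R$-equivalent to $1$, and $U\subset$ the class of $1$. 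Note that we never needed the segment itself to stay inside $V$; rationality of $Y$ (indeed, its being open in affine space) does all the work.

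I expect the main obstacle to be the analytic inverse function step: making precise that "étale over $1$" yields local surjectivity of $b$ on $k$-points near $1$ for an arbitrary complete valued field $k$. The fields relevant here are complete discretely valued and possibly non-locally-compact. I would cite the inverse function theorem for analytic maps over complete non-archimedean fields (e.g. Serre's Lie Algebras and Lie Groups). Alternatively, I would argue via Hensel's lemma, using that $\mathcal{O}_k$ is henselian and that $b$ is étale at $y_0$. Only surjectivity onto a neighborhood is actually needed, not the full homeomorphism.
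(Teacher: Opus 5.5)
Your proof is correct and follows the same route as the paper. You take the map $b\colon Y\to\mathcal{G}$ from Lemma~\ref{raghunathan}, apply the inverse function theorem over the complete field $k$ to get an adic neighborhood $U$ of $g$ inside $b(Y(k))$, and transport $R$-equivalence from $Y$ to $\mathcal{G}$ via $b$. Your straight-line argument in the ambient affine space $Z$, together with your check that a $k$-point of $Y$ lies over the base point, makes explicit what the paper compresses into ``$Y$ is rational, hence $R$-trivial''; that step is immediate precisely because $Y$ is open in an affine space.
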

    \begin{proof}
        By Lemma \ref{raghunathan}, there is a morphism $b\colon Y\to \mathcal{G}$ from a rational variety~ $Y$ which is \'{e}tale over $g\in G(k)$. By the inverse function theorem over complete valued fields (see \cite[Corollary 2.1.1(ii)]{igusa2000introduction}), $b$ is an adic homeomorphism (between sets of $K_v$-rational points) over an open neighborhood $U$ of $g$ in $\mathcal{G}(K_v)$.

        We claim that every $h\in U$ is $R$-equivalent to $g$. Indeed, let $x,y\in Y(k)$ be such that $b(x)=g$ and $b(y) = h$. Since $Y$ is a rational $k$-variety, it is $R$-trivial and hence there is a $k$-rational morphism $f\colon\mathbb{A}_k^1\dashrightarrow Y$ with $f(0) = x$ and $f(1)=y$. Then, the $k$-rational map $b\circ f\colon \mathbb{A}_k^1\to G$ satisfies $(b\circ f)(0)=g$ and $(b\circ f)(1)=h$.
    \end{proof}

    We are now ready to state and prove the main theorem:

    \begin{theorem}\label{conjecture-disproved}
    There exists an adjoint group $G$ defined over a  field $K$, with a discrete valuation $v$ and corresponding completion $K_v$, such that the embedding $G(K)\to G(K_v)$ does not have $v$-adically dense image. In particular, $G$ does not have weak approximation.
\end{theorem}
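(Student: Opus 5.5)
The plan is to combine Lemma \ref{val-fld} with Lemma \ref{r-eq-open}, following the strategy outlined in the introduction. Fix a global field $F$ with $\operatorname{char}F\neq 2$ in which $-1$ is a square (for instance $\mathbb{Q}(i)$). Let $G$ and $E$ be as in Proposition \ref{merk-psim}, so that $G$ is an adjoint $F$-group of type $D_n$. Lemma \ref{val-fld} then gives a field $K\supseteq F$ and a discrete valuation $v$ on $K$ with $G(K)/R=1$ and $G(K_v)/R\neq 1$. The base change $G_K$ is still adjoint, being an inner/outer form of the same split adjoint group. Moreover, $v$ is discrete, since it is the $(f_\alpha)$-adic valuation on $F'(t)$ from the proof of Lemma \ref{val-fld}. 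So $G_K$ together with $v$ is the candidate.

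First, since $G(K_v)/R\neq 1$, we can pick $g\in G(K_v)$ that is not $R$-equivalent to $1$. Next, $K_v$ is a complete valued field and $G$ is connected reductive, so Lemma \ref{r-eq-open} provides a $v$-adic open neighborhood $U$ of $g$ with $U$ contained in the $R$-equivalence class of $g$ in $G(K_v)$.

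Now suppose the image of $G(K)$ were dense in $G(K_v)$. Then some $h\in G(K)$ would lie in $U$. Since $G(K)/R=1$, there is a rational map $\mathbb{A}^1_K\dashrightarrow G$ over $K$ joining $1$ to $h$. Base changing this map to $K_v$ shows that $h$ is $R$-equivalent to $1$ in $G(K_v)$. Because $h\in U$, it is also $R$-equivalent to $g$, and transitivity gives $g\sim 1$, a contradiction. Hence $G(K)$ is not dense in $G(K_v)$. Taking $S=\{v\}$ shows that $G$ fails weak approximation.

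The main point needing care is this compatibility step: an $R$-equivalence over $K$ must remain an $R$-equivalence over $K_v$. This holds because the rational map stays defined at $0$ and $1$ after scalar extension. We also rely on the topology in Lemma \ref{r-eq-open} being the same $v$-adic topology used for weak approximation. Neither issue seems to be a genuine obstacle; the substantive input is already in Lemmas \ref{val-fld} and \ref{r-eq-open}.
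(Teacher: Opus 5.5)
Your proposal is correct and follows the paper's proof. You pick $g\in G(K_v)$ not $R$-equivalent to $1$ and use Lemma~\ref{r-eq-open} to get a $v$-adic neighborhood inside its $R$-class; an approximating $h\in G(K)$ then yields a contradiction via $G(K)/R=1$ and Lemma~\ref{val-fld}. Your extra remarks (that $G_K$ is still adjoint, $v$ is discrete, and $R$-equivalence survives base change to $K_v$ and is transitive) only make explicit what the paper leaves implicit.
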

\begin{proof}
    Let $F$ be any global field of characteristic different from $2$ in which $-1$ is a square. Let $G$ be as in Proposition \ref{merk-psim} and let $K,v$ and $K_v$ be as in Lemma \ref{val-fld}.
    
    Let $g\in G(K_v)$ be a $K_v$-rational point that is not in the $R$-equivalence class of $1$. By Lemma \ref{r-eq-open}, there is a $v$-adically open neighborhood $U$ of $g$ in $G(K_v)$ that is contained in the $R$-equivalence class of $g$.

    If $G$ had weak approximation, then there would be an element $h\in G(K)\cap U\subset G(K_v)$. Hence, $h$ is $R$-equivalent to $g$. Since $G(K)/R=1$, the identity element $1$ would be in the $R$-equivalence class of the element $h$ in $G(K)$ and therefore also in $G(K_v)$. This is a contradiction since $g$ was chosen to be not $R$-equivalent to $1$.
\end{proof}

This answers the Platonov-Th\u{a}\'{n}g conjecture negatively. We conclude by providing an explicit counterexample:

\begin{example}\label{example}
    Consider $F = \mathbb{Q}(i)$ and the quaternion algebra $Q = (2,5)_F$ over $F$. This is the unique (up to isomorphism) quaternion algebra over $F$ ramified exactly at the places $(2+i)$ and $(2-i)$. One may check that $d=6$ satisfies the conditions in Proposition \ref{main-prop-sec1}. Then, by Proposition \ref{merk-psim}, the algebra $A = \operatorname{M}_3(Q)$ has an involution $\sigma$ such that $G = \textbf{PSim}_+(A,\sigma)$ is not universally $R$-trivial.

    Lemma \ref{val-fld} and Theorem \ref{conjecture-disproved} then imply that $G_K$ fails weak approximation for some $K$ over $F$. Following Merkurjev's construction in \cite[Theorem 2, Lemma 8]{merkurjev1996r}, one can find such a $K$ with  $\operatorname{tr.deg}_FK\leq 13$.

    More generally, for each $A_n = M_n(Q)$ with $n\geq 3$ odd, there is an involution $\sigma_n$ such that $G_n = \textbf{PSim}_+(A_n,\sigma_n)$ does not have weak approximation when base changed to some field $K$ of transcendence degree at most $3n+4$ over $F = \mathbb{Q}(i)$.
\end{example}

\section*{Acknowledgments}
I am indebted to my advisor, Julia Hartmann, for her guidance, careful reading of multiple drafts, and numerous suggestions that substantially improved this work. I also thank Danny Krashen for many insightful discussions and for recommending the use of Lemma \ref{raghunathan}. I am grateful to David Harbater for helpful conversations and to Igor Rapinchuk for thoughtful comments on the manuscript. I thank Mattie Ji and Xingyu Meng for assistance with preliminary computational work that helped guide some results. This material is based upon work supported in part by the U.S. National Science Foundation under Grant No. DMS-2102987.

     \bibliographystyle{alpha}
    \bibliography{bibliography}{}

\end{document}